\DeclareMathOperator{\N}{N}
\renewcommand*\env@matrix[1][*\c@MaxMatrixCols c]{%
  \hskip -\arraycolsep
  \let\@ifnextchar\new@ifnextchar
  \array{#1}}
\newtheorem{thm}{Theorem} 
\newtheorem{lem}[thm]{Lemma}
\newtheorem{conj}[thm]{Conjecture}
\theoremstyle{remark}
\newtheorem{rmk}{Remark}
\theoremstyle{remark}
\newcommand{\Mod}[1]{\ (\textup{mod}\ #1)}     
\newcommand{\QQ}{\mathbb{Q}}
\title{Galois groups of some iterated polynomials over cyclotomic extensions} 
\author[Wade Hindes]{Wade Hindes}
\begin{document}
\begin{abstract} \normalsize Let $\varphi_p(z)=(z-1)^p+2-\zeta_p$, where $\zeta_p\in\bar{\mathbb{Q}}$ is a primitive $p$-th root of unity for some odd prime $p$. Building on previous work, we show that the $n$-th iterate $\varphi_p^n(z)$ has Galois group $[C_p]^n$, an iterated wreath product of cyclic groups, whenever $p$ is not a Wieferich prime.  \vspace{.5cm}  
\end{abstract}
\maketitle
\renewcommand{\thefootnote}{}
\footnote{2010 \emph{Mathematics Subject Classification}: Primary: 11R32, 37P15}
The study of Galois groups of iterates of rational functions has gained much interest in recent years; see, for instance, \cite{Hamblen,Jonessurvey,Jones-Manes,Stoll}. However, there are very few explicit examples known when the degree of the map is at least three. In this note, we build upon recent work in \cite{AIM} and prove the following:
\begin{thm}{\label{thm:surj}} Let $p$ be an odd prime, let $\zeta_p$ be a primitive $p$th root of unity, and let \vspace{.1cm}  
\[\varphi_p(z)=(z-1)^p+2-\zeta_p.\vspace{.1cm}  \]   
If $p$ is not a Wieferich prime, i.e. $2^{p-1}\not\equiv1\Mod{p^2}$, then \vspace{.1cm}  
\[\;\;\;\;\;\Gal_{\QQ(\zeta_p)}(\varphi^n)=[C_p]^n\;\; \text{for all $n\geq1$}.\vspace{.1cm}\]
\end{thm}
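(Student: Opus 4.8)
The plan is to induct on $n$, reducing the whole statement to a single Kummer-theoretic non-$p$-th-power condition at each level. Write $K=\QQ(\zeta_p)$, let $L_n$ be the splitting field of $\varphi^n$ over $K$ (so $L_0=K$), and set $G_n=\Gal_K(\varphi^n)$, giving a tower with restriction surjections $G_n\twoheadrightarrow G_{n-1}$. Since $\zeta_p\in K$, the fiber above any root $\alpha$ consists of the points $1+\zeta_p^{\,j}(\alpha-2+\zeta_p)^{1/p}$, so each step $L_n/L_{n-1}$ is Kummer of exponent $p$ and the arboreal image lands in the iterated wreath product of cyclic groups, $G_n\hookrightarrow[C_p]^n$. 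Comparing orders, it then suffices, assuming $G_{n-1}=[C_p]^{n-1}$, to prove the single equality $[L_n:L_{n-1}]=p^{p^{n-1}}$.

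The heart of the reduction is a module argument. The roots of $\varphi^n$ lying over the roots $\alpha_1,\dots,\alpha_{p^{n-1}}$ of $\varphi^{n-1}$ are the $p$-th roots of $\beta_i=\alpha_i-(2-\zeta_p)$, so by Kummer theory $\Gal(L_n/L_{n-1})$ is dual to the $\F_p$-span $V_n$ of the classes $[\beta_i]$ in $L_{n-1}^{\times}/(L_{n-1}^{\times})^p$, and $[L_n:L_{n-1}]=p^{\dim V_n}$. By the inductive hypothesis $G_{n-1}$ acts transitively on the $\alpha_i$, so $V_n$ is a quotient of the transitive permutation module over $\F_p$; in characteristic $p$ such a module has a \emph{simple} socle, the line spanned by the norm vector. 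Hence $\dim V_n=p^{n-1}$ precisely when the norm vector survives, i.e. when $\prod_i\beta_i\notin(L_{n-1}^{\times})^p$. Since $\varphi^{n-1}$ is monic and $1$ is the unique critical point with $\varphi(1)=2-\zeta_p$, one computes $\prod_i\beta_i=(-1)^{p^{n-1}}\varphi^{n-1}(2-\zeta_p)=-\varphi^n(1)$; as $-1$ is a $p$-th power for $p$ odd, everything reduces to showing $\varphi^n(1)\notin(L_{n-1}^{\times})^p$ for all $n$.

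To prove that, I would work locally at $\p=(1-\zeta_p)$, where $\pi:=1-\zeta_p$ is a uniformizer, $e(\p/p)=p-1$, and the residue field is $\F_p$. Because $\zeta_p\equiv 1$ and $(z-1)^p\equiv z^p-1\pmod{\p}$, the map $\varphi$ reduces to $z\mapsto z^p$, and a direct expansion gives $\varphi^n(1)=1+\pi+(\text{valuation}\ge p)$, so $v_\p(\varphi^n(1)-1)=1$ for every $n$. Since the critical level $e/(p-1)$ of a local field scales with $e$, the element $\varphi^n(1)-1$ sits \emph{exactly} at the critical level of $L_{n-1,\P}$ for any $\P\mid\p$, where being a $p$-th power is controlled by the additive map $\bar x\mapsto \bar x^{\,p}+\bar c\,\bar x$. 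A Wilson-type congruence (from $\N_{K/\QQ}(\pi)=p$ and $(p-1)!\equiv-1$) forces $\bar c\equiv-1$, so this leading map is degenerate and the genuine obstruction is pushed one level deeper — precisely to $\bmod p^2$ depth. There the decisive invariant is the Fermat quotient $q_p(2)=(2^{p-1}-1)/p\bmod p$, which surfaces globally through $\N_{K/\QQ}(2-\zeta_p)=\Phi_p(2)=2^p-1\equiv 1+2p\,q_p(2)\pmod{p^2}$. The non-Wieferich hypothesis is exactly $q_p(2)\not\equiv0$, making the relevant residue nonzero and hence $\varphi^n(1)\notin(L_{n-1}^{\times})^p$.

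The first two paragraphs are the formalism developed in \cite{AIM}; the new content, and the main obstacle, is the last step. The difficulty is not the leading-order computation (for small $n$ it is even unconditional) but showing that the obstruction persists at the second-order level and is detected uniformly by $q_p(2)$ for \emph{all} $n$. This requires controlling the ramification of $\p$ throughout the tower $L_{n-1}$ — supplied inductively by $G_{n-1}=[C_p]^{n-1}$ — and carrying the element $\varphi^n(1)$ carefully through the higher unit filtration, so that the degeneracy $\bar c=-1$ and the surviving $q_p(2)$-term are independent of $n$. Establishing that uniformity is where I expect the real work to lie.
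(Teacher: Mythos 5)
Your first two paragraphs (the wreath-product embedding, the Kummer-theoretic identification of $\Gal(L_n/L_{n-1})$ with the dual of the span $V_n$ of the classes $[\beta_i]$, the simple-socle argument for the transitive permutation module over a $p$-group, and the computation $\prod_i\beta_i=-\varphi^n(1)$) are a correct reduction, essentially the machinery the paper imports from Hamblen--Jones--Madhu and the companion paper of Bush--Hindes--Looper. The gap is in the step you yourself flag as "where the real work lies": proving $\varphi^n(1)\notin(L_{n-1}^{\times})^p$. Your plan is to detect this locally at the wildly ramified prime $\mathfrak{p}=(1-\zeta_p)$, but as described it does not work. Since $\varphi^{n-1}$ is Eisenstein at $\mathfrak{p}$, the prime $\mathfrak{p}$ is totally ramified in $K(\alpha)/K$ and hence $e(\mathfrak{P}/\mathfrak{p})$ is a positive power of $p$ for every $\mathfrak{P}\mid\mathfrak{p}$ in $L_{n-1}$ once $n\geq2$. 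Thus $v_{\mathfrak{P}}(\varphi^n(1)-1)=e(\mathfrak{P}/\mathfrak{p})$ is divisible by $p$ and lies strictly below $pe_{\mathfrak{P}}/(p-1)$, so the leading term of $\varphi^n(1)$ in the unit filtration is automatically a $p$-th power of the residue field (the Artin--Schreier map you invoke only governs the level $pe_{\mathfrak{P}}/(p-1)$, not the level $e_{\mathfrak{P}}/(p-1)$ where your element sits). One is therefore forced through an unbounded chain of absorption steps whose outcome depends on the higher ramification of $\mathfrak{p}$ in $L_{n-1}$, which grows with $n$ and which you do not control; the asserted uniform appearance of the Fermat quotient $q_p(2)$ at "second order" is not established and is not obviously true. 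Worse, there is a structural reason to doubt this is the right place to look: $\varphi^n(1)\equiv1\Mod{\mathfrak{p}}$, so $v_{\mathfrak{p}}(\varphi^n(1))=0$ and $\mathfrak{p}$ contributes nothing to the factorization of $\varphi^n(1)$; the obstruction actually lives at other primes.

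The paper's route is global and much simpler. One shows the norm congruence $\N_{\QQ(\zeta_p)/\QQ}(\varphi_p^n(1))\equiv 2^p-1\Mod{p^2}$ for all $n$: since $\varphi_p^n(1)\equiv1\Mod{\mathfrak{p}}$, each conjugate of $(\varphi_p^{n-1}(1)-1)^p$ lies in $p\cdot\mathfrak{p}$, and a Galois-invariance argument kills all contributions of these terms to the norm modulo $p^2$, leaving $\Phi_p(2)=2^p-1$. Separately, $2^p-1$ is a $p$-th power mod $p^2$ if and only if $p$ is Wieferich. Now if some $\varphi_p^m(1)$ had all valuations divisible by $p$, its ideal would be the $p$-th power of an ideal, so its norm would be a $p$-th power in $\Z$ (the sign being absorbable since $p$ is odd), contradicting the two facts above. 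This produces, for every $m$, a prime $\mathfrak{q}_m$ (necessarily away from $\mathfrak{p}$) with $v_{\mathfrak{q}_m}(\varphi_p^m(1))\not\equiv0\Mod p$; the coprimality of the ideals $(\varphi_p^n(1))$ and $(\varphi_p^m(1))$ for $n\neq m$ (which follows from $\varphi_p^s(0)=1-\zeta_p$ and $\varphi_p^t(1)\equiv1\Mod{\mathfrak{p}}$) ensures such $\mathfrak{q}_m$ is unramified in $L_{m-1}$, so that $v$ being prime to $p$ at $\mathfrak{q}_m$ certifies $\varphi_p^m(1)\notin(L_{m-1}^{\times})^p$. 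If you replace your $\mathfrak{p}$-local analysis with this norm computation, your reduction in the first two paragraphs completes to a proof.
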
 
\begin{rmk} The only known Wieferich primes are $1093$ and $3511$. Moreover, there are at least $O(\log X)$ non-Wieferich primes less than $X$, assuming the abc-conjecture \cite{Silverman}.     
\end{rmk} 
To see how the Galois group of $\varphi_p^n$, the $n$-th iterate of $\varphi_p$, naturally sits inside $[C_p]^n$, see \cite[Lemma 3.3]{Juul}. The key new fact in this note, which does not appear in \cite{AIM}, is the following norm calculation: 
\begin{lem}{\label{lem:norm}} Let $p$ be an odd prime, let $\zeta_p$ be a primitive $p$th root of unity, and let \vspace{.1cm}  
\[\varphi_p(z)=(z-1)^p+2-\zeta_p. \vspace{.1cm}  \]   
If $\N_{\QQ(\zeta_p)/\QQ}: \QQ(\zeta_p)\rightarrow\QQ$ denotes the absolute norm, then \vspace{.1cm}  
\[\boxed{\N_{\QQ(\zeta_p)/\QQ}(\varphi_p^n(1))\equiv 2^p-1\Mod{p^2}\;\;\text{for all $n\geq1$.}} \vspace{.1cm} \]  
\end{lem}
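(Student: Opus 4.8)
The plan is to prove the slightly sharper statement that $\N_{\QQ(\zeta_p)/\QQ}(\varphi_p^n(1))\equiv\N_{\QQ(\zeta_p)/\QQ}(\varphi_p(1))\Mod{p^2}$ for every $n\ge1$, and then to evaluate the base case exactly. Writing $a_n=\varphi_p^n(1)$, one has $a_1=\varphi_p(1)=2-\zeta_p$, so $\N(a_1)=\prod_{c=1}^{p-1}(2-\zeta_p^c)=\Phi_p(2)=2^p-1$, which is already the desired value. Thus it suffices to show that the iteration never moves the norm modulo $p^2$.

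Set $\lambda=1-\zeta_p$, a uniformizer at the unique prime above $p$, and recall the ramification identity $\lambda^{p-1}\O=p\O$. First I would rewrite the recursion in shifted form: if $b_n=a_n-1$ then $b_1=\lambda$ and $b_{n+1}=b_n^{\,p}+\lambda$, so the displacement $d_n:=a_n-a_1=b_n-\lambda$ satisfies $d_1=0$ and $d_{n+1}=(\lambda+d_n)^p$. An immediate induction then gives $d_n\in\lambda^p\O$ for all $n\ge2$. Next I would expand $\N(a_n)=\prod_{c=1}^{p-1}\sigma_c(a_1+d_n)=\prod_c\bigl(\sigma_c(a_1)+\sigma_c(d_n)\bigr)$ over the conjugates $\sigma_c:\zeta_p\mapsto\zeta_p^c$ and sort by the number of factors of $d_n$. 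The constant term is exactly $\N(a_1)$; the terms involving two or more conjugates of $d_n$ sum to a rational integer of $\lambda$-valuation at least $2p>2(p-1)$, hence divisible by $p^2$. The only remaining contribution is the linear term, which assembles into the trace $\operatorname{Tr}_{\QQ(\zeta_p)/\QQ}\!\bigl(d_n\,w\bigr)$, where $w=\N(a_1)/a_1=\prod_{c\neq1}(2-\zeta_p^c)\in\O$.

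The crux, and the main obstacle, is showing that this trace term is divisible by $p^2$; a purely valuation-theoretic estimate does not suffice, since $d_n w\in\lambda^p\O$ only has $\lambda$-valuation $\ge p<2(p-1)$ and so naively yields divisibility by $p$ alone. The extra factor of $p$ comes from the ramified trace: because $\operatorname{Tr}(\zeta_p^j)\equiv-1\Mod{p}$ for every $j$ (including $j\equiv0$, as $p-1\equiv-1$), reduction modulo $\lambda$ shows $\operatorname{Tr}(\lambda\O)\subseteq p\Z$. Combining this with $\lambda^p\O=\lambda\cdot\lambda^{p-1}\O=p\lambda\O$ gives $\operatorname{Tr}(d_n w)\in\operatorname{Tr}(\lambda^p\O)=p\,\operatorname{Tr}(\lambda\O)\subseteq p^2\Z$.

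Putting the three pieces together yields $\N(a_n)\equiv\N(a_1)=2^p-1\Mod{p^2}$ for $n\ge2$, while $n=1$ is the exact computation above, completing the argument. I expect the bookkeeping in the norm expansion and the verification $d_n\in\lambda^p\O$ to be routine; the one genuinely delicate point is the sharpening of the linear trace term from divisibility by $p$ to divisibility by $p^2$, which rests entirely on the ramification fact $\operatorname{Tr}(\lambda\O)\subseteq p\Z$ together with $\lambda^p\O=p\lambda\O$.
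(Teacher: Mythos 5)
Your argument is correct and is essentially the paper's proof in different notation: both write $\varphi_p^n(1)=(2-\zeta_p)+d_n$ with $d_n\in\mathfrak{p}^p=p\,\mathfrak{p}$, expand the norm as a product over conjugates, kill the terms with two or more perturbation factors by valuation, and gain the extra factor of $p$ in the linear term from the fact that a Galois-invariant integer lying in $\mathfrak{p}$ lies in $\mathfrak{p}\cap\Z=p\Z$. Your packaging of the linear term as $\operatorname{Tr}(d_nw)$ with $\operatorname{Tr}(\mathfrak{p})\subseteq p\Z$ (which you could get directly from Galois-invariance of $\mathfrak{p}$, without computing $\operatorname{Tr}(\zeta_p^j)$) is just a rephrasing of that same step, so there is nothing substantive to add.
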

\begin{proof} We fix some notation. Let $G_p=\Gal(\QQ(\zeta_p)/\QQ)\cong\mathbb{Z}/(p-1)\mathbb{Z}$ and let $\mathfrak{p}$ be the principal ideal generated by $1-\zeta_p$ in $\mathbb{Z}[\zeta_p]$. It is well known that $\mathfrak{p}\cap\mathbb{Z}=p\,\mathbb{Z}$ is the unique prime lying over $p$ and that $p\,\mathbb{Z}[\zeta_p]=\mathfrak{p}^{p-1}$ is totally ramified. These and other standard facts regarding cyclotomic fields can be found in \cite[\S2.9]{Samuel}.    

From here, Lemma \ref{lem:norm} then follows from a more general fact: \vspace{.05cm} 
\[x\equiv 1\Mod{\mathfrak{p}}\;\;\text{implies}\;\;\N_{\QQ(\zeta_p)/\QQ}(\varphi_p(x))\equiv 2^p-1\Mod{p^2}.\vspace{.05cm}\]  
To see this, note that if $x\equiv 1\Mod{\mathfrak{p}}$, then $(x-1)^p$ is in the ideal $p\cdot\mathfrak{p}$, which is Galois invariant. Hence, for any $\sigma\in\Gal(\QQ(\zeta_p)/\QQ)$ there exists $b_\sigma\in\mathbb{Z}[\zeta_p]$ such that \vspace{.1cm}  
\[\sigma((x-1)^p):=y_\sigma=p\cdot(1-\zeta_p)\cdot b_\sigma.\vspace{.1cm}\] 
Writing $a_\sigma=\sigma(2-\zeta_p)$ for $\sigma\in\Gal(\QQ(\zeta_p)/\QQ)$, we see that \vspace{.05cm}  
\[ \N_{\QQ(\zeta_p)/\QQ}(\varphi_p(x))=\prod_{\sigma\in G_p}\big(y_\sigma+a_\sigma\big).\vspace{.05cm} \]
However, expanding out this product of sums, we see that \vspace{.1cm}  
\begin{equation*}
\begin{split} 
\N_{\QQ(\zeta_p)/\QQ}(\varphi_p(x))&\equiv\sum_{\sigma\in G_p}y_\sigma\cdot\Big(\prod_{\tau\neq\sigma}a_\tau\Big)+\prod_{\sigma\in G_p}a_\sigma\\ 
&\equiv \sum_{\sigma\in G_p}y_\sigma\cdot\Big(\prod_{\tau\neq\sigma}a_\tau\Big)+N_{\QQ(\zeta_p)/\QQ}(2-\zeta)\; \Mod{p^2},
\end{split} 
\end{equation*} 
since any term containing more than a single $y_\sigma$, each of which is already divisible by $p$, must vanish (mod $p^2$). On the other hand, substituting the relation $y_\sigma=p\cdot(1-\zeta_p)\cdot b_\sigma$, we see that 
\[\sum_{\sigma\in G_p}y_\sigma\cdot\Big(\prod_{\tau\neq\sigma}a_\tau\Big)=p\cdot\bigg[(1-\zeta_p)\bigg(\sum_{\sigma\in G_p}b_\sigma\cdot\Big(\prod_{\tau\neq\sigma}a_\tau\Big) \bigg)\bigg].\] 
In particular,   
\begin{equation}{\label{eq:inp}}
(1-\zeta_p)\bigg(\sum_{\sigma\in G_p}b_\sigma\cdot\Big(\prod_{\tau\neq\sigma}a_\tau\Big) \bigg)
\end{equation} 
is an element of $\mathfrak{p}$. Conversely, 
\[(1-\zeta_p)\bigg(\sum_{\sigma\in G_p}b_\sigma\cdot\Big(\prod_{\tau\neq\sigma}a_\tau\Big) \bigg)=\frac{1}{p}\sum_{\sigma\in G_p}y_\sigma\cdot\Big(\prod_{\tau\neq\sigma}a_\tau\Big)\]
is Galois invariant. Hence, (\ref{eq:inp}) is in $\mathfrak{p}\cap\mathbb{Z}=p\mathbb{Z}$. Therefore, the sum of the terms containing a single $y_\sigma$ must also vanish (mod $p^2$), and we deduce that 
\[\N_{\QQ(\zeta_p)/\QQ}(\varphi_p(x))\equiv \N_{\QQ(\zeta_p)/\QQ}(2-\zeta_p) \Mod{p^2}.\]  
However, if $\Phi_p(z)=z^{p-1}+z^{p-2}+\dots z+1$ denotes the $p$th cyclotomic polynomial, then 
\[\N_{\QQ(\zeta_p)/\QQ}(2-\zeta_p)=\prod_{\sigma\in G_p}(2-\sigma(\zeta_p))=\Phi_p(2)=2^{p-1}+2^{p-1}+\dots +2+1=2^p-1,\]
which completes the proof of Lemma \ref{lem:norm}.       
\end{proof}
We now use a slight generalization of \cite[Theorem 25]{Hamblen}; for more details, see \cite[\S2]{AIM}.  
\begin{lem}{\label{lem:max}} Let $p$ be an odd prime, let $\zeta_p$ be a primitive $p$th root of unity, and let \vspace{.1cm}  
\[\varphi_p(z)=(z-1)^p+2-\zeta_p. \vspace{.1cm}  \]   
Fix $n\geq1$. If for all $1\leq m\leq n$ there exists a prime ideal $\mathfrak{q}_m\subset \mathbb{Z}[\zeta_p]$ such that \vspace{.05cm} 
\[v_{\mathfrak{q}_m}(\varphi_p^m(1))\not\equiv0\Mod{p},\vspace{.05cm}\] 
then $\Gal_{\QQ(\zeta_p)}(\varphi^n)=[C_p]^n$.   
\end{lem}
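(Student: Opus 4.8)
The plan is to prove surjectivity onto $[C_p]^n$ by induction on $n$, describing each new layer of the preimage tower by Kummer theory and reducing the required multiplicative independence to a single non-$p$-th-power condition read off from the critical orbit. Write $K_m$ for the splitting field of $\varphi_p^m$ over $\QQ(\zeta_p)$, so $K_0=\QQ(\zeta_p)$ and $\Gal_{\QQ(\zeta_p)}(\varphi^m)=\Gal(K_m/K_0)$. Since $\varphi_p'(z)=p(z-1)^{p-1}$, the unique critical point is $z=1$, whose forward orbit is exactly the sequence $\varphi_p^m(1)$ in the hypothesis. Solving $\varphi_p(z)=\beta$ gives $(z-1)^p=\beta-(2-\zeta_p)$, so, because $\zeta_p\in K_0$, passing from a root $\beta$ of $\varphi_p^{m-1}$ to its $\varphi_p$-preimages adjoins a single $p$-th root of the radicand $r_\beta:=\beta-(2-\zeta_p)$. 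Hence $K_m=K_{m-1}\big(\{r_\beta^{1/p}\}\big)$ is a Kummer extension, and the embedding $\Gal_{\QQ(\zeta_p)}(\varphi^n)\hookrightarrow[C_p]^n$ of \cite[Lemma 3.3]{Juul} realizes each layer cyclically.

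For the inductive step I would assume $\Gal(K_{n-1}/K_0)=[C_p]^{n-1}$ and note that, by Kummer theory, maximality of $\Gal(K_n/K_0)$ is equivalent to the multiplicative independence of the $p^{n-1}$ radicands $\{r_\beta\}$ in $K_{n-1}^\times/(K_{n-1}^\times)^p$. The key computation is that their product collapses to the critical orbit: since $\varphi_p^{n-1}(X)=\prod_\beta(X-\beta)$ is monic and $2-\zeta_p=\varphi_p(1)$, and since $p^{n-1}$ is odd,
\[\prod_\beta r_\beta=(-1)^{p^{n-1}}\,\varphi_p^{n-1}(2-\zeta_p)=-\,\varphi_p^n(1).\]
As $-1=(-1)^p$ is a $p$-th power, $\prod_\beta r_\beta$ is a $p$-th power in $K_{n-1}$ if and only if $\varphi_p^n(1)$ is. This is the discriminant-to-orbit identity that ties the abstract independence question to the concrete quantity appearing in the statement.

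Next I would convert the hypothesis $v_{\mathfrak{q}_n}(\varphi_p^n(1))\not\equiv0\Mod{p}$ into the assertion that $\varphi_p^n(1)$ is not a $p$-th power in $K_{n-1}$. A $p$-th power has all valuations divisible by $p$; if $\mathfrak{Q}\mid\mathfrak{q}_n$ in $K_{n-1}$ has ramification index $e$, then $v_{\mathfrak{Q}}(\varphi_p^n(1))=e\,v_{\mathfrak{q}_n}(\varphi_p^n(1))$, so the obstruction survives to $K_{n-1}$ precisely when $p\nmid e$. The hard part will be exactly this ramification transfer: $\Gal(K_{n-1}/K_0)=[C_p]^{n-1}$ is a $p$-group, so a priori its ramification is $p$-power, and one must argue that the primes detecting the critical orbit at level $n$ are new — they do not divide the lower discriminants controlled by $\varphi_p^m(1)$ for $m<n$ — which is what the full hypothesis ``for all $1\le m\le n$'' provides, forcing $\mathfrak{q}_n$ to be unramified in $K_{n-1}/K_0$.

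Finally I would invoke the generalization of \cite[Theorem 25]{Hamblen} recorded in \cite[\S2]{AIM}: given the transitive action of $[C_p]^{n-1}$ on the $p^{n-1}$ roots $\beta$, its all-or-nothing principle upgrades the non-$p$-th-power of the single product $\prod_\beta r_\beta$ to genuine independence of all the $r_\beta$, hence to the maximality of the kernel $\Gal(K_n/K_{n-1})=(C_p)^{p^{n-1}}$ and thus $\Gal(K_n/K_0)=[C_p]^n$. The base case $n=1$ is the same computation, with radicand $\zeta_p-2=-\varphi_p(1)$. I expect the genuine new content of my write-up to be the product identity of the second paragraph together with the verification of the hypotheses; the all-or-nothing upgrade and the ramification bookkeeping are precisely what the cited criterion is designed to supply.
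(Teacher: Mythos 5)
Your route is genuinely different from the paper's: you set out to reprove the wreath-product criterion from scratch via Kummer theory, whereas the paper treats that criterion (\cite[Theorem 4.3]{Hamblen}, as adapted to unicritical polynomials in \cite[\S2]{AIM}) as a black box and devotes its proof to verifying the two hypotheses that make it applicable, namely (i) irreducibility of every iterate --- each $\varphi_p^n$ is Eisenstein at $\mathfrak{p}=(1-\zeta_p)$, since its constant term is $1-\zeta_p$ and the intermediate coefficients are divisible by $p$ --- and (ii) pairwise coprimality of the ideals $(\varphi_p^m(1))$. Your Kummer set-up, the product identity $\prod_\beta r_\beta=(-1)^{p^{n-1}}\varphi_p^{n-1}(2-\zeta_p)=-\varphi_p^n(1)$, and the all-or-nothing step (the augmentation kernel is the unique maximal submodule of $\F_p^{p^{n-1}}$ under a transitive action of a $p$-group) are all correct and are indeed the engine behind the cited theorem.

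The genuine gap sits exactly at the step you flag as ``the hard part,'' and your proposed resolution of it is wrong. You assert that the hypothesis ``for all $1\le m\le n$ there exists $\mathfrak{q}_m$ with $v_{\mathfrak{q}_m}(\varphi_p^m(1))\not\equiv0\Mod{p}$'' is what forces $\mathfrak{q}_n$ to be unramified in $K_{n-1}/K_0$. It is not: that hypothesis says nothing about whether $\mathfrak{q}_n$ divides some $\varphi_p^m(1)$ with $m<n$, nor about whether $\mathfrak{q}_n=\mathfrak{p}$, and either occurrence would place $\mathfrak{q}_n$ among the ramified primes of $K_{n-1}/K_0$ (these divide $p\cdot\prod_{m<n}\varphi_p^m(1)$), whereupon the ramification index --- necessarily a power of $p$ --- would multiply the valuation by $p$ and destroy the obstruction. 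What closes this gap is a dynamical argument specific to $\varphi_p$, which the paper supplies: if a prime $\mathfrak{q}$ divides both $\varphi_p^n(1)$ and $\varphi_p^m(1)$ with $n>m$, then $\varphi_p^{n-m}(0)\equiv\varphi_p^{n-m}(\varphi_p^{m}(1))\equiv\varphi_p^n(1)\equiv0\Mod{\mathfrak{q}}$; but $\varphi_p^s(0)=1-\zeta_p$ for all $s\geq1$ (the point $1-\zeta_p$ is fixed), so $\mathfrak{q}=\mathfrak{p}$, while $\varphi_p^t(1)\equiv1\Mod{\mathfrak{p}}$ for all $t\geq0$, a contradiction. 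The same congruence $\varphi_p^n(1)\equiv1\Mod{\mathfrak{p}}$ rules out $\mathfrak{q}_n=\mathfrak{p}$ itself, which is essential because $\mathfrak{p}$ is totally (and wildly) ramified throughout the tower by the Eisenstein property. Without these two facts your induction does not go through; with them added, your write-up amounts to a self-contained proof of the criterion the paper simply cites.
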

\begin{proof}  Note first that $\varphi_p^n(z)$ is Eisenstein at $\mathfrak{p}$: the constant term of $\varphi_p^n(z)$ is $1-\zeta_p$, and all intermediate terms are divisible by $p$ (and hence divisible by $\mathfrak{p}$ also). In particular, $\varphi_p^n(z)$ is irreducible over $\mathbb{Q}(\zeta_p)$ for all $n\geq1$. 

Furthermore, note that the principal ideals in $\mathbb{Z}[\zeta_p]$ generated by $\varphi_p^n(1)$ and $\varphi_p^m(1)$ are coprime for all $n\neq m$: if $\varphi_p^n(1)\in\mathfrak{q} $ and $\varphi_p^m(1)\in\mathfrak{q}$ for some prime ideal $\mathfrak{q}\subset\mathbb{Z}[\zeta_p]$ and some $n>m$, then $\varphi_p^{n-m}(0)\in\mathfrak{q}$:\vspace{.05cm}
\[\varphi_p^{n-m}(0)\equiv\varphi_p^{n-m}(\varphi_p^{m}(1))\equiv\varphi_p^n(1)\equiv0\Mod{\mathfrak{q}}.\vspace{.05cm}\] 
However, $\varphi_p^s(0)=1-\zeta_p$ for all $s\geq1$, so that $\mathfrak{q}=\mathfrak{p}$. On the other hand, it is easy to check that $\varphi_p^t(1)\equiv 1\Mod{\mathfrak{p}}$ for all $t\geq0$, contradicting our assumption that $\varphi_p^n(1)\in\mathfrak{q}$ (or that $\varphi_p^m(1)\in\mathfrak{q}$). 

In particular, Lemma \ref{lem:max} follows immediately from \cite[Theorem 4.3]{Hamblen}, whose statement and proof can be translated verbatim to unicritical polynomials, i.e.,   polynomials of the form $f(z)=(z-\gamma)^d+c$; see \cite[\S2]{AIM}. 
         
\end{proof} 
Finally, we note the following characterization of Wieferich primes: 
\begin{lem}{\label{wief}} Let $p$ be an odd prime. Then $p$ is Wieferich if and only if $2^p-1$ is a $p$-th power $\Mod{p^2}$. 
\end{lem}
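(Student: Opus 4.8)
The plan is to reduce the whole statement to the structure of the unit group $(\mathbb{Z}/p^2\mathbb{Z})^*$. First I would record, via Fermat's little theorem, that $2^p-1\equiv 1\Mod{p}$; in particular $2^p-1$ is a unit modulo $p^2$ and lies in the kernel $H$ of the reduction map $(\mathbb{Z}/p^2\mathbb{Z})^*\to(\mathbb{Z}/p\mathbb{Z})^*$. This subgroup $H=\{a:a\equiv 1\Mod{p}\}$ has order $p$, which I would cite as standard.

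The central claim I would isolate is: \emph{an element $a\equiv 1\Mod{p}$ is a $p$-th power modulo $p^2$ if and only if $a\equiv 1\Mod{p^2}$.} To prove it, suppose $a\equiv b^p\Mod{p^2}$. Reducing modulo $p$ and using $b^p\equiv b\Mod{p}$ gives $b\equiv a\equiv 1\Mod{p}$, so $b\in H$. Since $|H|=p$, Lagrange's theorem forces $b^p\equiv 1\Mod{p^2}$, whence $a\equiv 1\Mod{p^2}$. The converse is immediate since $1=1^p$. (Alternatively, since $(\mathbb{Z}/p^2\mathbb{Z})^*$ is cyclic of order $p(p-1)$, its $p$-th powers are exactly the solutions of $a^{p-1}\equiv 1\Mod{p^2}$; writing $2^p-1=1+pt$ and expanding $(1+pt)^{p-1}\equiv 1-pt\Mod{p^2}$ recovers the same criterion $p\mid t$.)

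Applying the claim to $a=2^p-1$, I would conclude that $2^p-1$ is a $p$-th power modulo $p^2$ if and only if $2^p-1\equiv 1\Mod{p^2}$, i.e. $2^p\equiv 2\Mod{p^2}$. Since $2$ is invertible modulo $p^2$, this is equivalent to $2^{p-1}\equiv 1\Mod{p^2}$, which is exactly the definition of $p$ being a Wieferich prime, giving both directions at once. The only step needing care is the central claim, and the main obstacle is really just keeping straight the distinction between being a $p$-th power in the full group $(\mathbb{Z}/p^2\mathbb{Z})^*$ versus in the order-$p$ subgroup $H$; the Fermat reduction $b\equiv 1\Mod{p}$ is what bridges the two and collapses the $p$-th power condition to triviality inside $H$.
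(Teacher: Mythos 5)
Your proof is correct and follows essentially the same route as the paper's: both directions hinge on reducing a putative $p$-th power $b^p\equiv 2^p-1\Mod{p^2}$ modulo $p$ to force $b\equiv 1\Mod{p}$, and then showing that any such $b$ satisfies $b^p\equiv 1\Mod{p^2}$. The only difference is cosmetic: the paper verifies this last step by expanding $(1+tp)^p$ via the binomial theorem, whereas you invoke Lagrange's theorem in the order-$p$ kernel of $(\mathbb{Z}/p^2\mathbb{Z})^*\to(\mathbb{Z}/p\mathbb{Z})^*$.
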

\begin{proof} If $p$ is Wieferich, i.e.  $2^{p-1}\equiv1\Mod{p^2}$, then $2^{p}-1\equiv 1\Mod{p^2}$ is a $p$-th power modulo $p^2$. On the other hand, suppose that $2^p-1\equiv x^p\Mod{p^2}$. Reducing modulo $p$, we see that $x\equiv 1\Mod{p}$. Write $x\equiv 1+tp\Mod{p^2}$ for some $t\in\{0,1,\dots,p-1\}$. Then 
\[x^p=1+p^2t+\sum_{i=2}^{p}\binom{p}{i}(tp)^i\equiv1\Mod{p^2}.\] 
Hence, $2^p-1\equiv 1\Mod{p^2}$, from which it follows that  $2^{p-1}\equiv 1\Mod{p^2}$.   
\end{proof}    
We now use these auxiliary facts to prove our main result. 
\begin{proof}[(Proof of Theorem \ref{thm:surj})] Suppose that $\Gal_{\QQ(\zeta_p)}(\varphi^n)\neq[C_p]^n$ for some iterate $n\geq1$. Then Lemma \ref{lem:max} implies that there exists some $m\leq n$, such that:
\begin{equation}{\label{valuation}}
v_\mathfrak{q}(\varphi_p^m(1))\equiv 0\Mod{p},\;\; \text{for all primes $\mathfrak{q}\subset\mathbb{Z}[\zeta_p]$}. 
\end{equation} 
Note first that $\varphi_p^m(1)$ cannot be a unit: 
\[\N_{\QQ(\zeta_p)/\QQ}(\varphi_p^m(1))\equiv 2^p-1\not\equiv\pm{1}\Mod{p^2}\vspace{.1cm}\]
by Lemma \ref{lem:norm}, Lemma \ref{wief}, and the fact that $p$ is not a Wieferich prime. From here, (\ref{valuation}) implies the ideal equation:\vspace{.05cm}  
\[ \varphi_p^m(1)\cdot\mathbb{Z}[\zeta_p]=\prod_{i=1}^s\mathfrak{q}_i^{e_i}=\prod_{i=1}^s\mathfrak{q}_i^{p\cdot r_i}=\bigg(\prod_{i=1}^s\mathfrak{q}_i^{r_i}\bigg)^p=(I)^p;\vspace{.05cm}\]
here the $\mathfrak{q}_i$ are prime ideals and $I=\prod\mathfrak{q}_i^{r_i}$. We now take the ideal norm \cite[\S3.5]{Samuel} of the equation above and obtain \vspace{.1cm}  
\[\big|\N_{\QQ(\zeta_p)/\QQ}(\varphi_p^m(1))\big|=\N\Big(\varphi_p^m(1)\cdot\mathbb{Z}[\zeta_p]\Big)=\N(I)^p.\vspace{.1cm}\] 
In particular, $|\N_{\QQ(\zeta_p)/\QQ}(\varphi_p^m(1))\big|=\pm\N_{\QQ(\zeta_p)/\QQ}(\varphi_p^m(1))$ is a $p$th power. However, note that the $\pm$ is irrelevant: since $p$ is odd,  the $-1$ can be absorbed into the $p$th power (if needed). Therefore, we deduce that \vspace{.05cm}  
\[\N_{\QQ(\zeta_p)/\QQ}(\varphi_p^m(1))=y^p, \;\;\;\text{for some}\; y\in\mathbb{Z}.\vspace{.1cm}\] 
However, reducing this equation (mod $p^2$), we see that $\N_{\QQ(\zeta_p)/\QQ}(\varphi_p^m(1))\equiv y^p\Mod{p^2}$, a contradiction of Lemma \ref{lem:norm}, Lemma \ref{wief}, and the fact that $p$ is not a Wieferich prime.              
\end{proof} 
Although our proof breaks down, it is likely that the Galois groups of iterates of $\varphi_p$ are still as large as possible when $p$ is a Wieferich prime: otherwise, $\N_{\QQ(\zeta_p)/\QQ}(\varphi_p^m(1))$ is a $p$th power (in $\mathbb{Z}$, not just in $\mathbb{Z}/p^2\mathbb{Z}$) for some $m\geq1$, which is doubtful. With this in mind, we make the following conjecture.  \vspace{.1cm}
 
\begin{conj} Let $p$ be an odd prime and let $\varphi_p(z)=(z-1)^p+2-\zeta_p$ be as above. Then $\Gal_{\mathbb{Q}(\zeta_p)}(\varphi_p^n)=[C_p]^n$ for all $n\geq1$;
\end{conj}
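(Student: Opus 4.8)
The plan is to reduce the conjecture, exactly as in the proof of Theorem~\ref{thm:surj} but stopping before the appeal to Lemma~\ref{lem:norm}, to an integrality statement that no longer mentions Galois groups. By Lemma~\ref{lem:max} the equality $\Gal_{\QQ(\zeta_p)}(\varphi^n)=[C_p]^n$ can fail only if for some $m\leq n$ one has $v_{\mathfrak{q}}(\varphi_p^m(1))\equiv 0\Mod{p}$ for every prime $\mathfrak{q}\subset\mathbb{Z}[\zeta_p]$, which forces $(\varphi_p^m(1))=I^p$ as ideals and hence $\big|\N_{\QQ(\zeta_p)/\QQ}(\varphi_p^m(1))\big|=\N(I)^p$; since $p$ is odd the sign is harmless, so the obstruction is an honest equation $\N_{\QQ(\zeta_p)/\QQ}(\varphi_p^m(1))=y^p$ in $\mathbb{Z}$. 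It therefore suffices to prove, for \emph{every} odd prime $p$ and every $m\geq 1$, that $\N_{\QQ(\zeta_p)/\QQ}(\varphi_p^m(1))$ is not a perfect $p$-th power in $\mathbb{Z}$. The case $m=1$ is immediate, since $\N_{\QQ(\zeta_p)/\QQ}(\varphi_p(1))=2^p-1$ lies strictly between $1^p$ and $2^p$. The entire difficulty is concentrated in the range $m\geq 2$ when $p$ is Wieferich, precisely the case in which the congruence $\N_{\QQ(\zeta_p)/\QQ}(\varphi_p^m(1))\equiv 2^p-1\equiv 1\Mod{p^2}$ of Lemma~\ref{lem:norm} ceases to obstruct.

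My first attempt would be to sharpen Lemma~\ref{lem:norm} from modulus $p^2$ to modulus $p^3$. Writing $\beta_m=\varphi_p^m(1)-1$ and $\pi=1-\zeta_p$, the orbit satisfies the clean recursion $\beta_{m+1}=\beta_m^{\,p}+\pi$ with $\beta_1=\pi$; since $v_{\mathfrak{p}}(\beta_m)=1$ for all $m$, a short induction gives $v_{\mathfrak{p}}(\beta_{m+1}-\beta_m)=p+(m-1)\cdot 2(p-1)$, which controls how $\varphi_p^m(1)$ varies with $m$ at each $\mathfrak{p}$-adic level. Feeding this into the expansion $\N_{\QQ(\zeta_p)/\QQ}(\varphi_p(x))=\prod_\sigma(y_\sigma+a_\sigma)$ from the proof of Lemma~\ref{lem:norm}, one would now keep the terms containing exactly one and exactly two factors $y_\sigma$ (all others vanish mod $p^3$) and evaluate the single-$y_\sigma$ sum, already known to be divisible by $p^2$, one level deeper. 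If this refinement can be made \emph{independent of $m$}, then introducing the higher Wieferich condition $2^{p-1}\equiv 1\Mod{p^3}$ and imitating Lemma~\ref{wief} modulo $p^3$ would eliminate every $p$ that is Wieferich but not Wieferich of order three, in particular the known primes $1093$ and $3511$.

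The main obstacle is twofold. First, the mod-$p^3$ part of the single-$y_\sigma$ sum involves the auxiliary elements $b_\sigma$ defined by $\sigma((x-1)^p)=p(1-\zeta_p)b_\sigma$, which depend on $x=\varphi_p^{m-1}(1)$; the valuation estimate above is exactly what one hopes will make this dependence wash out, but verifying that it does is delicate. Second, and more fundamentally, \emph{no} congruence modulo a fixed power $p^k$ can settle the conjecture for all $p$: it can only exclude primes failing an explicitly checkable congruence, and one cannot certify the relevant condition for the (not fully known) set of Wieferich primes. A complete proof therefore seems to require showing unconditionally that the integer $\N_{\QQ(\zeta_p)/\QQ}(\varphi_p^m(1))$ is never a perfect $p$-th power, for instance by producing a rational prime $\ell$ dividing it to multiplicity coprime to $p$. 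This is a primitive-prime-divisor (Zsygmondy-type) statement for the orbit of $1$ under $\varphi_p$: using the pairwise coprimality of the ideals $(\varphi_p^j(1))$ established in Lemma~\ref{lem:max}, together with canonical-height lower bounds in the style of Ingram--Silverman, one would try to force a primitive prime divisor of multiplicity one. Ruling out the degenerate scenario in which every prime divisor of $\N_{\QQ(\zeta_p)/\QQ}(\varphi_p^m(1))$ occurs to a multiple-of-$p$ power is, I expect, the crux of the problem and the reason the statement remains conjectural.
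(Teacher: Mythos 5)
The statement you are addressing is the paper's closing \emph{conjecture}; the paper offers no proof of it, only the observation (in the paragraph immediately preceding it) that a failure of maximality would force $\N_{\QQ(\zeta_p)/\QQ}(\varphi_p^m(1))$ to be an honest $p$th power in $\mathbb{Z}$, ``which is doubtful.'' Your opening reduction is exactly that observation, carried out correctly: Lemma~\ref{lem:max} together with the ideal-norm argument from the proof of Theorem~\ref{thm:surj} shows that the only possible obstruction is an equation $\N_{\QQ(\zeta_p)/\QQ}(\varphi_p^m(1))=y^p$ with $y\in\mathbb{Z}$, and you dispose of the sign the same way the paper does. Up to that point you are faithfully reconstructing what the paper proves and what it merely remarks.

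Beyond that point, however, the proposal does not prove the conjecture, and you concede as much. The mod-$p^3$ refinement would at best replace ``non-Wieferich'' by ``not Wieferich of order three,'' trading one congruence hypothesis for a weaker one; as you yourself observe, no congruence modulo a fixed power of $p$ can cover all primes, so this route cannot close the statement. Even the partial step is not yet justified: in the valuation computation for $\beta_{m+1}-\beta_m=(\beta_m-\beta_{m-1})\sum_i\beta_m^i\beta_{m-1}^{p-1-i}$, the main term $p\,\beta_{m-1}^{p-1}$ and the correction coming from $\beta_m-\beta_{m-1}$ can have equal $\mathfrak{p}$-adic valuation (both $2(p-1)$ at the first step), so the claimed exact formula $v_{\mathfrak{p}}(\beta_{m+1}-\beta_m)=p+(m-1)\cdot 2(p-1)$ requires a leading-coefficient analysis before it can be used. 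The primitive-prime-divisor strategy you sketch last is the natural unconditional attack, but what is needed is not merely a primitive prime of $\N_{\QQ(\zeta_p)/\QQ}(\varphi_p^m(1))$ but one occurring to multiplicity coprime to $p$, and neither the paper nor your proposal supplies that. In short: the reduction is correct and matches the paper's own remarks, the discussion of obstacles is sound, but the statement remains unproved --- consistent with its status in the paper as a conjecture.
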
 

\indent \textbf{Acknowledgements:} This research began at the May 2016 AIM workshop titled ``The Galois theory of orbits in arithmetic dynamics," and I thank AIM and the organizers of this workshop. I also thank Michael Bush and Nicole Looper for their companion work in \cite{AIM}. Finally, I thank the anonymous referee for their helpful comments and for pointing out Lemma \ref{wief}. 
   
\end{document}